\numberwithin{equation}{section}
\newcommand{\bea}{\begin{eqnarray}}
\newcommand{\eea}{\end{eqnarray}}
\newcommand{\be}{\begin{eqnarray*}}
\newcommand{\ee}{\end{eqnarray*}}
\newtheorem{theorem}{Theorem}[section]
\newtheorem{lemma}{Lemma}[section]
\newtheorem{corollary}{Corollary}[section]
\newtheorem{example}{Example}[section]
\newtheorem{algorithm}{Algorithm}[section]
\begin{document}
\title[Dynamical Systems over Rings]{ Linear Dynamical Systems over \\ Finite Rings}
\author[Guangwu Xu]{Guangwu Xu}
\address{Department of Electrical Engineering and Computer Science, University of Wisconsin, Milwaukee, WI 53201, USA} \email{gxu4uwm@uwm.edu}
\author[Yi Ming Zou]{Yi Ming Zou}
\address{Department of Mathematical Sciences, University of Wisconsin, Milwaukee, WI 53201, USA} \email{ymzou@uwm.edu}
\maketitle
%%%%%%%%%%%%%%%%%%%%%%%%%%%%%%%%%%%%%%%%%%%%%%%%%%%%%%%%%%%%%%%%%%%%%%%%%%%%%%%%%%%
\begin{abstract}
The problem of linking the structure of a finite linear dynamical system with its dynamics is well understood when the phase space is a vector space over a finite field. The cycle structure of such a system can be described by the elementary divisors of the linear function, and the problem of determining whether the system is a fixed point system can be answered by computing and factoring the system's characteristic polynomial and minimal polynomial. It has become clear recently that the study of finite linear dynamical systems must be extended to embrace finite rings. The difficulty of dealing with an arbitrary finite commutative ring is that it lacks of unique factorization. In this paper, an efficient algorithm is provided for analyzing the cycle structure of a linear dynamical system over a finite commutative ring. In particular, for a given commutative ring $R$ such that $|R|=q$, where $q$ is a positive integer, the algorithm determines whether a given linear system over $R^n$ is a fixed point system or not in time $O(n^3\log(n\log(q)))$.
\end{abstract}
%%%%%%%%%%%%%%%%%%%%%%%%%%%%%%%%%%%%%%%%%%%%%%%%%%%%%%%%%%%%%%%%%%%%%%%%%%%%%%
\section{Introduction}
A finite dynamical system is a function $f: X\longrightarrow X$, where $X$ is a finite set. The dynamics of the system is obtained by iterating the function $f$. Such dynamical systems have a variety of applications, such as in engineering, computer science, and computational biology [1,3,4]. 
\par It is a well-known fact in finite field theory that a function $f:\mathbb{F}_{q}^{n}\longrightarrow \mathbb{F}_{q}$, where $\mathbb{F}_q$ is a finite field of $q$ elements, can be represented by a polynomial function. Thus any function $f:\mathbb{F}_{q}^{n}\longrightarrow \mathbb{F}_{q}^{n}$ can be represented by $f=(f_1,\ldots, f_n)$, where $f_i\in \mathbb{F}_q{[x_1,\ldots, x_n]}$. When $f$ is a linear system, the dynamics of $f$ can be described using its characteristic polynomial and minimal polynomial, and the computation can be done in polynomial time [1,6]. For general polynomial systems, there have been only limited successes in determining the dynamics of such systems, except for monomial dynamical systems, where all the coordinate functions $f_i$ are monomials. 
\par
In [3], monomial dynamical systems over $\mathbb{Z}_2$, i.e. Boolean monomial systems, were studied. In [4], the problem of determining whether a monomial dynamical system over a finite field $\mathbb{F}_q$ is a fixed point system was reduced to the same question of an associated Boolean monomial system and a linear system over a ring of the form $\mathbb{Z}/(q-1)$. In [1], the study of fixed point systems was further developed. In particular, linear systems were defined for modules over a ring, and a necessary and sufficient condition for a linear system to be a fixed point system was derived using Fitting's lemma. 
\par
Though the result in [1] does not lead to an efficient algorithm for determining whether a linear system over a general finite commutative ring is a fixed point system, the computational problem, which is ultimately needed in applications, was discussed in some detail in the special case where the ring is a finite field, and a computational method via the factorization of the characteristic polynomial and the minimal polynomial of the linear function was described. As pointed out in [4], the approach via characteristic polynomial and minimal polynomial for a linear dynamical system over a finite commutative ring faces considerable difficulties due to the lack of unique factorization (see also the comment after Example 4 in [1]). The following example illustrates this point. 

\begin{example} Let $f:\mathbb{Z}_{8}^{2}\longrightarrow \mathbb{Z}_{8}^{2}$ be defined by the $2\times 2$ matrix 
\[ 
A=\left(\begin{array}{cc} 2 & 6\\ 1 & 0\end{array}\right). 
\]
Then $A^k\ne 0$ for $1\le k <6$ and $A^6=0$. Thus $f$ is a fixed point system with the only fixed point $0$. The characteristic polynomial of $A$ is $ch_{A}(\lambda)=\lambda^2+6\lambda+2$, which has no root in $\mathbb{Z}_8$, though $A$ clearly has an eigenvector $(0,4)^T$ corresponds to the eigenvalue $0$ (the eigenvalues and eigenvectors of a matrix over a commutative ring are defined as usual, see [2]). Note that 
\[
\lambda^6=(\lambda^2+6\lambda+2)(\lambda^4+2\lambda^3+2\lambda^2+4)\;\mbox{$\pmod{8}$}.
\]
\end{example}
\par
In this paper, we consider a different approach. Our approach is based on the fact that there are efficient algorithms for the computation of the powers of a matrix: the multiplication of two $n\times n$ matrices takes at most $n^3$ operations (the state of the art algorithms use close to $n^2$ operations). If $A$ is an $n\times n$ matrix, then to compute $A^m$, where $m$ is a positive integer, it will take about $n^3\log_2m$ operations. Therefore, one can just work with the matrix of a linear dynamical system directly to avoid the difficulties of dealing with the factoring problems over an arbitrary commutative ring.
\par
In order for this approach to work, one must have a reasonable upper bound on the exponents of the powers of the matrix, that is, a reasonable upper bound on the number of iterations, that one must compute in order to determine the dynamics of a given system. 
\par
Our first observation is, although Fitting's lemma tells us that a linear system will be stabilized after a certain number of iterations (see [1]), the lemma itself is a fairly general statement: it applies to any group $G$ that satisfies both ACC and DCC conditions on normal subgroups and any normal endomorphism $f$ of $G$ (see [7, p. 84]). While for the systems that we are interested in, the groups involved are finite abelian groups, and therefore, we should be able to derive more precise information on how many iterations it will need in order for a given system to reach a certain type of stabilization status.
\par
Our second observation is, the upper bound on the number of iterations also depends on the size of space. This can be seen from Example 1.1, where it takes $6$ iterations for the system to be stabilized. This can also be seen by just considering the simplest type of linear systems on $\mathbb{Z}_{q}$, where $q$ is a positive integer, namely the ones defined by a scalar multiplication. For such a system, the matrix size is 1, but the dynamics of the system depend on $q$. If the system is defined by the multiplication of an element $1<a<q$, then one either needs to know the prime factorizations of $a$ and $q$ or needs to compute the powers of $a$ to derive the dynamics of the system. Therefore, certain assumption on the size of $q$ must be made. Here we assume that the size of $q$ is comparable to the size of any integer that we maybe able to factor in the foreseeable future. We believe it is reasonable to make this assumption. Under this assumption, the numbers $\log_2q$ and $\log_2(\log_2q)$ are relatively small: the RSA keys are typically $1024-2048$ bits long and $\log_2(\log_22^{2048})=11$.  
\par
This paper is organized as follows. In Section 2, we develop the basic theory that lays the foundation for an efficient algorithm. In section 3, we describe an algorithm for determining whether a linear dynamical system over a finite ring is a fixed point system or not and give two examples of linear fixed point systems over finite rings which are not fields. In Section 4, we conclude with some discussions and an example.
\par\medskip
\section{Main results}
\par
Let $R$ be a finite commutative ring with $q>1$ elements. Let the prime factorization of $q$ be 
\be
q=\prod_{i=1}^{t}{p_{i}^{t_i}}.
\ee
\par
We shall view the elements of $R^n$, where $n$ is positive integer, as column vectors, and denote by $e_i$, $1\le i\le n$, the canonical basis (if $R$ has $1$). For a function $f$ from a set to the same set, we write
\be
f^m=\underbrace{f\circ f\circ\cdots\circ f}_{\mbox{$m$ copies}}
\ee
if $m$ is a positive integer. If $m$ is a positive number, not necessary an integer, then by writing $f^m$ we mean $f^{\lceil m\rceil}$, where $\lceil m\rceil$ is the smallest integer greater than or equal to $m$. Our first theorem upper bounds the number of iterations needed for a linear system to reach a certain stable status.
\par
\begin{theorem} Let $n$ be a positive integer, and let $f:R^n\longrightarrow R^n$ be a linear function. Then for any nonnegative integer $k$, we have 
\be
f^{n\log_{2}(q)+k}(R^n)=f^{n\log_{2}(q)}(R^n).
\ee
If $R$ is a field, then the factor $\log_{2}(q)$ is not needed, that is
\be
f^{n+k}(R^n)=f^{n}(R^n).
\ee
\end{theorem}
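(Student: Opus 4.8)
The plan is to analyze the descending chain of images
\[
R^n \supseteq f(R^n)\supseteq f^2(R^n)\supseteq f^3(R^n)\supseteq \cdots,
\]
obtained by repeatedly applying $f$. The first elementary observation is that this chain stays constant from the moment it first repeats: if $f^m(R^n)=f^{m+1}(R^n)$ for some $m$, then applying $f$ to both sides gives $f^{m+1}(R^n)=f^{m+2}(R^n)$, and by induction $f^{m+j}(R^n)=f^m(R^n)$ for every $j\ge 0$. Hence it suffices to prove that the chain has already become constant by the index $\lceil n\log_2(q)\rceil$; the claim for all $k$ then follows immediately, and likewise in the field case once we reach index $n$.

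To bound the index at which the chain stabilizes, I would simply count strict inclusions. Each $f^j(R^n)$ is an additive subgroup (indeed an $R$-submodule) of $R^n$, and $|R^n|=q^n$. If $f^j(R^n)\supsetneq f^{j+1}(R^n)$, then by Lagrange's theorem $|f^{j+1}(R^n)|$ is a proper divisor of $|f^j(R^n)|$, so $|f^{j+1}(R^n)|\le \tfrac12|f^j(R^n)|$. Thus if the inclusions $f^0(R^n)\supsetneq f^1(R^n)\supsetneq\cdots\supsetneq f^\ell(R^n)$ are all strict, then $1\le |f^\ell(R^n)|\le q^n/2^\ell$, forcing $\ell\le n\log_2(q)$. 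Consequently the first index $j^\ast$ with $f^{j^\ast}(R^n)=f^{j^\ast+1}(R^n)$ satisfies $j^\ast\le n\log_2(q)\le\lceil n\log_2(q)\rceil$, which is exactly what is needed (recall that $f^{n\log_2(q)}$ denotes $f^{\lceil n\log_2(q)\rceil}$).

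For the field case the same argument applies with the order of a subgroup replaced by the dimension of a subspace: when $R=\mathbb{F}_q$ is a field, each $f^j(R^n)$ is a linear subspace of the $n$-dimensional space $\mathbb{F}_q^n$, and a strict inclusion of subspaces drops the dimension by at least $1$. Hence at most $n$ strict inclusions can occur, the chain is constant from index $n$ onward, and $f^{n+k}(R^n)=f^n(R^n)$ for all $k\ge 0$.

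I do not expect a serious obstacle here; the only points needing a little care are the bookkeeping with the ceiling in $f^{n\log_2(q)}$, and the remark that the image of an $R$-linear map is in particular an additive subgroup, so that Lagrange's theorem applies even though $R$ need not be a field or a domain. One could alternatively phrase the estimate via composition lengths of finite modules, but the crude factor-of-$2$ bound already gives the stated result.
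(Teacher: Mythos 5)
Your argument is correct and follows essentially the same route as the paper: both examine the descending chain of images $R^n\supseteq f(R^n)\supseteq f^2(R^n)\supseteq\cdots$, use Lagrange's theorem to show each strict inclusion cuts the group order by at least a prime factor (hence at least a factor of $2$), and conclude stabilization by index $n\log_2(q)$, with the dimension count replacing the order count in the field case. The only cosmetic difference is that the paper tracks the individual prime factors of $q^n$ while you use the cruder factor-of-$2$ bound, which yields the same estimate.
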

\begin{proof} We first consider the general case when $R$ is a commutative ring. View $R^n$ as an $f$-module, set $M_0=R^n$, and consider a sequence of $f$-submodules of $M_0$ defined by
\bea
M_0\supseteq M_1=f(M_0)\supseteq\cdots\supseteq M_r=f^r(M_0)\supseteq\cdots.
\eea
Since each $M_r$ ($r\ge 0$) is a finite abelian group and 
\be
|M_0|=q^n=\prod_{i=1}^{t}{p_{i}^{nt_i}},
\ee
by Lagrange's theorem, we have
\be
|M_r|=\prod_{i=1}^{t}{p_{i}^{r_i}},
\ee
where $0\le r_i\le nt_i$. Thus, if $M_r\ne M_{r+1}$, then 
\be
|M_{r+1}|\le |M_r|/p_i
\ee
for some $1\le i\le t$. Therefore either there is an
\bea
r<\sum_{i=1}^{t}{nt_i}=n\sum_{i=1}^{t}{t_i}:=s,
\eea
such that $M_r=M_{r+1}$, or we must have $|M_s|=1$. In any case, $f(M_s)=M_s$. Since $s\le n\log_{2}(q)$, the first statement follows.
\par
If $R$ is a field, then the modules $M_i$ are vector spaces over $R$. So if $M_i\supsetneq M_{i+1}$, then $\dim M_{i+1}\le\dim M_{i}-1$. Since $\dim M_0=n$, the desired result follows. 
\end{proof}
\par
Next, we give a general lemma about fixed point systems on a finite set. We remark that one can almost read out the proof of the lemma from the proof of Theorem 2 in [1]. Here we give a proof which sheds some light from a different view.
\begin{lemma} If $X$ is a finite set and $f:X\longrightarrow X$ is a function such that $f(X)=X$, then $f$ is a fixed point system if and only if $f$ is the identity function.
\end{lemma}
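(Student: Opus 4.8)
The plan is to prove the two implications separately. The forward direction is immediate: if $f$ is the identity, then every point of $X$ is a fixed point, so a fortiori every periodic point is fixed and $f$ is a fixed point system; note that the hypothesis $f(X)=X$ plays no role here.

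For the converse, assume $f(X)=X$ and that $f$ is a fixed point system. First I would observe that $f(X)=X$ propagates: by a trivial induction on $N$, one gets $f^N(X)=X$ for every positive integer $N$. On the other hand, since $X$ is finite the orbit $x,f(x),f^2(x),\dots$ of any $x\in X$ is eventually periodic, and because $f$ is a fixed point system its eventual cycle is a single fixed point; hence there is an integer $N$ (one may take $N=|X|$, the transient part of any orbit having length less than $|X|$) with $f^N(x)\in\mathrm{Fix}(f)$ for all $x\in X$, i.e. $f^N(X)\subseteq\mathrm{Fix}(f)$. Combining the two observations gives $X=f^N(X)\subseteq\mathrm{Fix}(f)\subseteq X$, so every element of $X$ is fixed, that is, $f=\mathrm{id}$.

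I do not expect a genuine obstacle. The one step requiring care is the passage to a single exponent $N$ that works for all starting points, and this is precisely where finiteness of $X$ is used. A cleaner ``different view'', which I would also record, is the permutation picture: a surjective self-map of a finite set is automatically a bijection, hence a permutation, hence a disjoint union of cycles; the fixed point system condition forces every such cycle to have length one, which again yields $f=\mathrm{id}$. In either formulation the content is the same — surjectivity leaves no room for nontrivial transients, so among surjections the only fixed point system is the identity.
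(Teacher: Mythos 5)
Your proof is correct, and your primary argument takes a genuinely different route from the paper's. The paper argues via the permutation picture: surjectivity of $f$ on a finite set forces injectivity, so $f$ is a permutation, and writing it as a disjoint product of cycles shows it is a fixed point system exactly when every cycle has length one --- this is precisely the ``cleaner different view'' you record at the end. Your main argument instead never invokes injectivity: you propagate surjectivity to all iterates ($f^N(X)=X$), use finiteness to find a single $N$ with $f^N(X)\subseteq\mathrm{Fix}(f)$, and sandwich $X=f^N(X)\subseteq\mathrm{Fix}(f)$. Both are complete; the paper's version is shorter and makes the bijectivity structurally explicit, while yours has the small advantage of working directly from the defining property of a fixed point system (every orbit stabilizes at a fixed point) and of isolating exactly where finiteness enters, namely in the existence of a uniform exponent $N$. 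Your choice of $N=|X|$ for the uniform bound is fine. No gaps.
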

\begin{proof} Since $X$ is a finite set, $f(X)=X$ implies that $f$ is also injective. Thus $f$ is a permutation of the set $X$. Writing $f$ as a disjoint product of cycles, we see immediately that $f$ is a fixed point system if and only if all the cycles have length one, that is, $f$ is the identity function. 
\end{proof}
\par
Recall that an element $u$ in a commutative ring $R$ with $1$ is called a unit if it is invertible. The following is an immediate consequence of Lemma 2.1.
\begin{corollary}
Let $R$ be a finite commutative ring with $1$. Let $A:R^n\longrightarrow R^n$, where $A$ is an $n\times n$ matrix over $R$, be a linear dynamical system. If $A\ne I$ and $\det A$ is a unit in $R$, then $A$ is not a fixed point system.
\end{corollary}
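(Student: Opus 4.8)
The plan is to reduce the statement to Lemma 2.1 by using the hypothesis on $\det A$ to show that the map $A$ is surjective. First I would invoke the classical adjugate identity $\mathrm{adj}(A)\,A = A\,\mathrm{adj}(A) = (\det A)\,I$, which is valid over any commutative ring with $1$. Since $\det A$ is a unit, multiplying this identity through by $(\det A)^{-1}$ produces an explicit inverse $A^{-1}=(\det A)^{-1}\,\mathrm{adj}(A)$, so the matrix $A$ is invertible and hence the map $A:R^n\longrightarrow R^n$ is a bijection; in particular $A(R^n)=R^n$.

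Next I would apply Lemma 2.1 with $X=R^n$ and $f$ equal to the map $A$. Because $A(R^n)=R^n$, the lemma says that $A$ is a fixed point system if and only if it is the identity function on $R^n$. It therefore remains only to see that the assumption $A\ne I$ rules this out. Since $R$ has $1$, the canonical basis $e_1,\dots,e_n$ is available, and the $i$-th column of the matrix $A$ is precisely $Ae_i$; so if the map $A$ were the identity we would have $Ae_i=e_i$ for every $i$, which would force the matrix $A$ to equal $I$, a contradiction. Hence the map $A$ is not the identity function, and consequently not a fixed point system.

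I do not expect a genuine obstacle here: the argument is essentially bookkeeping around Lemma 2.1. The one ingredient that is not purely formal is the adjugate identity, which is standard, and the only step deserving a word of care is the distinction between the matrix $A$ and the linear map it defines, which is settled by evaluating on the canonical basis.
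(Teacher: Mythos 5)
Your proof is correct and follows exactly the route the paper intends: the paper states this corollary as an immediate consequence of Lemma 2.1 without writing out the details, and your argument (adjugate identity to get surjectivity, then Lemma 2.1, then evaluation on the canonical basis to rule out the identity map) is precisely the bookkeeping being left implicit.
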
 
Now we give a criterion for a linear dynamical system over a finite ring to be a fixed point system.
\begin{theorem} Let $R$ be a finite commutative ring with $q$ elements, let $n$ be a positive integer, let $f:R^n\longrightarrow R^n$ be a linear system, and let $A$ be the matrix of $f$ with respect to the canonical basis (if $R$ has $1$). Then $f$ is a fixed point system if and only if $f^{n\log_{2}(q)+1}=f^{n\log_{2}(q)}$, or equivalently $A^{n\log_{2}(q)+1}=A^{n\log_{2}(q)}$. If $R$ is a field, then the condition simplifies to $f^{n+1}=f^{n}$ or $A^{n+1}=A^{n}$.
\end{theorem}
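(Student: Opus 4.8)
The plan is to combine Theorem 2.1, which pins down where the image of $f$ stabilizes, with Lemma 2.1, which characterizes fixed point systems among surjections, in a short argument. Write $N=n\log_2(q)$, understood as $\lceil n\log_2(q)\rceil$ in accordance with the stated convention, and set $Y=f^{N}(R^n)$. By Theorem 2.1 we have $f^{N+1}(R^n)=f^{N}(R^n)$ at the level of images, i.e. $f(Y)=Y$; since $Y$ is finite this makes $f|_Y\colon Y\to Y$ a bijection, hence a permutation of $Y$.

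For the easy direction, I would suppose $f^{N+1}=f^{N}$. Then for every $x\in R^n$ we get $f(f^{N}(x))=f^{N+1}(x)=f^{N}(x)$, so $f^{N}(x)$ is a fixed point of $f$; thus every trajectory reaches a fixed point after at most $N$ steps, and $f$ is a fixed point system. For the converse, suppose $f$ is a fixed point system. Every element of $Y$ lies on a cycle of $f$ (because $f|_Y$ is a permutation of the finite set $Y$), and a fixed point system has only cycles of length one, so each element of $Y$ is fixed; equivalently $f|_Y=\mathrm{id}_Y$, which is exactly Lemma 2.1 applied with $X=Y$. Now for arbitrary $x\in R^n$ we have $f^{N}(x)\in Y$, hence $f^{N+1}(x)=f(f^{N}(x))=f^{N}(x)$; that is, $f^{N+1}=f^{N}$.

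The matrix reformulation $A^{N+1}=A^{N}$ is then immediate, since with respect to the canonical basis $f^m$ is the linear map with matrix $A^m$, so equality of the iterates is equivalent to equality of the corresponding matrix powers. Finally, when $R$ is a field the same argument goes through verbatim with $N$ replaced by $n$, using the sharper conclusion $f^{n+k}(R^n)=f^{n}(R^n)$ of Theorem 2.1 in place of the general one.

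The argument is largely bookkeeping once Theorem 2.1 and Lemma 2.1 are in hand; the only point requiring a little care is the reduction to the stabilized image $Y$, where $f$ restricts to a surjection so that Lemma 2.1 (and the cycle-versus-fixed-point dichotomy) applies, together with the observation that $f^{N}$ already maps into $Y$, which is what lets us promote the equality of iterates from $Y$ to all of $R^n$.
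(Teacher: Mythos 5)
Your proof is correct and follows essentially the same route as the paper: apply Theorem 2.1 to get that $f$ restricts to a permutation of the stabilized image $Y=f^{N}(R^n)$, then use Lemma 2.1 (the cycle decomposition) on $Y$ to identify fixed point systems with $f|_Y=\mathrm{id}_Y$, and translate back to $f^{N+1}=f^{N}$. If anything, you spell out slightly more explicitly than the paper why being a fixed point system on $R^n$ is equivalent to $f|_Y$ being the identity, which is a welcome bit of care.
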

\begin{proof} By Theorem 2.1, 
\be
f(f^{n\log_{2}(q)}(R^n))=f^{n\log_{2}(q)}(R^n).
\ee
Thus, by Lemma 2.1, $f$ is a fixed point system if and only if 
\be
f|_{f^{n\log_{2}(q)}(R^n)}=id|_{f^{n\log_{2}(q)}(R^n)},
\ee
which is equivalent to 
\be
f(f^{n\log_{2}(q)}(x))=f^{n\log_{2}(q)}(x),\quad \forall x\in R^n.
\ee 
That is $f^{n\log_{2}(q)+1}=f^{n\log_{2}(q)}$.
\end{proof}
\par
Theorem 2.2 provides an efficient algorithm to determine whether a linear dynamical system over a finite commutative ring is a fixed point system, which will be discussed in the next section. The results in this section also reduce the study of a general linear dynamical system over a finite commutative ring to an invertible non-fixed point system. 
\medskip
%%%%%%%%%%%%%%%%%%%%%%%%%%%%%%%%%%%%%%%%%%%%%%%%%%%%%%%%%%%%%%%%%%%%%%%%%%%%%%%%%
\section{Algorithms and Examples}
In this section, we first describe an algorithm based on Theorem 2.2 for determining
whether a linear system $A:\mathbb{Z}_{q}^{n}\longrightarrow\mathbb{Z}_{q}^{n}$ is a fixed point system or not, where $q>1$ is an integer and $A$ is taken to be the form of an $n\times n$ matrix. We choose $\mathbb{Z}_{q}$ as the base ring for the simplicity of the statements, the same algorithm works for any ring of the type 
\be
\mathbb{Z}_{q_1}\times \mathbb{Z}_{q_2}\times\cdots\times\mathbb{Z}_{q_k},
\ee
as well as for any finite commutative ring with $1$ as long as the operations of the ring are implemented.
\par
The algorithm is called an {\sl LFPS (Linear Fixed Point System) test}.
\begin{algorithm} {\bf LFPS test}.\\
{\bf Input:} Two positive integers $n$ and $q>1$, an $n\times n$ matrix $A$ over $\mathbb{Z}_{q}$, and $b_{t-1}2^{t-1}+b_{t-2}2^{t-2}\cdots +b_12+b_0$, the binary representation of $\lceil n\log_2 q\rceil$.\\
{\bf Output:} {\bf true} or {\bf false}. \\
\begin{enumerate}
\item $X\gets I$
\item {\bf for} $i$ from $t-1$ down to $0$ {\bf do}\\
\mbox{}\hspace{5mm} $X\gets XX$\\
\mbox{}\hspace{5mm} {\bf if} $b_i = 1$ {\bf then}\\
\mbox{}\hspace{10mm} $X\gets AX$\\
\item {\bf if} $X=XA$ {\bf then}\\
\mbox{}\hspace{5mm} {\bf return true}\\
{\bf else}\\
\mbox{}\hspace{5mm} {\bf return false}\\
\end{enumerate}
\end{algorithm}
Let us explain this algorithm in more detail.  In step (1) the
(matrix) variable $X$ is initialized by the identity matrix $I$. The
main computation of $A^{\lceil n\log_2 q\rceil}$ is performed in
step (2) using the ``square and multiply'' method.
Since $b_{t-1} = 1$ (the leading bit of $\lceil n\log_2
q\rceil$), at the beginning (i.e., $i=t-1$), $X$ first becomes
$II=I$, then becomes $X = AI=A$. After this, for each $i$ with
$t-2\ge i\ge 0$, the value in $X$ becomes the square of the value
previously stored in $X$. If $b_i=1$, then the value of $X$ is
further updated to be the product of $A$ and the previous
value. At the end of step (2), the value in $X$ is $A^{\lceil
n\log_2 q\rceil}$. For example, if $A$ is a $6\times 6$ matrix over
$\mathbb{Z}_{3\cdot 7}$, then $\lceil 6\log_2 21\rceil = 27$ and by the
``square and multiply'' method:
\[
A^{27} = A^{1\cdot 2^4+1\cdot 2^3+0\cdot 2^2+1\cdot 2+1} =
\bigg(\bigg(\big((A)^2A\big)^2 \bigg)^2A\bigg)^2A.
\]
In step (3), the result of Theorem 2.2 is applied. Since the value
of $X$ is now $A^{\lceil n\log_2 q\rceil}$, the system is a
fixed point system if $X=XA$, and the program returns {\bf true};
otherwise, the system is not a fixed point system and the program
returns {\bf false}.
\par
\medskip
Suppose two matrices over  $\mathbb{Z}_{q}$ can be multiplied
with $O(n^{\omega})$ operations, by using Strassen's algorithm,
$\omega \le \log_27$. This number can be further reduced, see
[5]. The cost of running {\sl LFPS test} is
$O(n^{\omega}(\log_2n +\log_2\log_2 q))$. Under our assumption that
$\log_2\log_2 q$ is small, determine whether a linear system over a finite ring is a fixed point system or not can be done with $O(n^3)$ operations. If $R$ is a field, then the number of operations required is $O(n^{\omega}\log_2n)$.

As long as the problem of determining whether a linear system is a fixed point system is concerned, a comparison of the computational cost analysis given in [1] with the analysis given above shows, in addition to its simplicity, that our algorithm is at least as efficient as the approach via the characteristic polynomial and the minimal polynomial even for the case of finite fields. 

Next we give two examples of fixed point linear systems over
finite rings. The first example is over the ring $\mathbb{Z}_{2^4}$.
\begin{example}
The system $A:\mathbb{Z}_{2^4}^4\longrightarrow \mathbb{Z}_{2^4}^4$ defined by
\[
A=\begin{pmatrix}
  15 & 7 & 7 & 1\\
  0 & 7 & 11 & 7\\
  7 &7 &7 &11\\
  14&8 &15 &6
\end{pmatrix},
\]
is a fixed point system. This can be verified by using Algorithm 3.1 to compute $A^{4\log_2 2^4}=A^{16}$ ($4$ iterations) and verify that
$A^{16}=A^{17}$. The ``stabilized'' matrix is
\[
A^{16} =\begin{pmatrix}
  12 & 1 & 2 & 11\\
  0 & 4 & 8 & 12\\
  4 & 3 & 6 & 1\\
  12& 1 & 2 & 11
\end{pmatrix}.
\]
\end{example}

The second example describes a fixed point system over the ring
$\mathbb{Z}_{3^2\cdot 5}$.
\begin{example}
The system $A:\mathbb{Z}_{3^2\cdot 5}^4\longrightarrow\mathbb{Z}_{3^2\cdot 5}^4$ defined by
\[
A=\begin{pmatrix}
36 &23 &32 &9\\
27 &32 & 30& 25\\
32& 25& 13 &28\\
32 &8& 41& 40
\end{pmatrix}.
\]
is a fixed point system. The ``stabilized'' matrix is
\[
A^{4\lceil\log_2(3^2\cdot 5)\rceil}=A^{24}=\begin{pmatrix}
0 &9& 9& 27\\
10 &27 &12 &26\\
35 &18 &33 &19\\
5 &27 &42 &31
\end{pmatrix}
\]
\end{example}

We remark that the number $r$ such that $A^r=A^{r+1}$ can be smaller than our theoretical bound $n\log_2q$ in some cases. In the second example above, $r=6$, i.e., we have $A^6=A^7$. Our algorithm can be refined so it terminates before the iteration process reaches the theoretical bound if $r$ is small enough, say $r<\sqrt{n\log_2q}$. But we believe that the gain is not significant by doing so.
%%%%%%%%%%%%%%%%%%%%%%%%%%%%%%%%%%%%%%%%%%%%%%%%%%%%%%%%%%%%%%%%%%%%%%%%%%%%
\section{Conclusions}
We have provided an efficient algorithm to determine whether a linear dynamical system over a finite commutative ring is a fixed point system. As an application, our result together with the results in [3] and [4] should settle the problem of determining whether a monomial dynamical system over a finite field is a fixed point system.
\par
When the system is not a fixed point system, a natural problem is finding the cycles of the system. If $R$ is a field, then under the assumption that the elementary divisors of a linear system and their orders (the order of a polynomial $g$ is the least positive integer $k$ such that $g$ divides $x^k-1$) can be computed efficiently, the cycles can be computed by a theorem due to Elspas (see [6]). Obviously, the implementation via such approach is quite involved, in particular the computation of the orders of the elementary divisors. The orders of the elementary divisors are the lengths of the cycles. If the lengths of the cycles can be found, then the cycles can be obtained. For example, suppose that $f$ is linear dynamical system over a finite commutative ring $R$ with $1$, and suppose that the lengths of its cycles, say 
\be
0=k_0<k_1<\ldots<k_m, 
\ee
are known. Then the cycles can be computed by solving the linear systems:
\be
(A^{k_i}-I)X=0,\quad 0\le i\le m.
\ee
\begin{example} Consider the system $A:\mathbb{Z}_{105}^4\longrightarrow\mathbb{Z}_{105}^4$ defined by
\[
A=\begin{pmatrix}
70 &27 &5 &26\\
35 &98 & 104& 99\\
81& 85& 78 &102\\
27 &97& 13& 69
\end{pmatrix}.
\]
Since $\det A=2 \pmod{105}$ is a unit in $\mathbb{Z}_{105}$, Corollary 2.1 implies that $A$ is not a fixed point system. Since $A^{24}=I$ and $A^k\ne I$ for $0<k<24$, we see that the cycles lengths are the factors of $24$. With some computation, one can find the cycles lengths, they are $1, 2, 24$. The only cycle of length 1 is $0$, there are $5512$ cycles of length 2, and $5064150$ cycles of length $24$.
\end{example}
However, the search for the cycle lengths seems to be exponential. 
\par\medskip
Computations of linear systems over finite commutative rings are basic, since one typically handles the other computational problems by reducing them to the ones about linear systems, and for systems over finite fields, the reduction can result in linear systems over commutative rings which are not necessary fields.  Developing efficient algorithms over commutative rings deserves further attention (see also [1]). 
%%%%%%%%%%%%%%%%%%%%%%%%%%%%%%%%%%%%%%%%%%%%%%%%%%%%%%%%%%%%%%%%%%%%%%%%%%%%%%%%%
\subsection*{Acknowledgment}
The first author gratefully acknowledges partial support from 
the National 973 Project of China (No. 2007CB807900).
%%%%%%%%%%%%%%%%%%%%%%%%%%%%%%%%%%%%%%%%%%%%%%%%%%%%%%%%%%%%%%%%%%%%%%%%%%
%\begin{references}
 
%\end{references}
\end{document}